\newtheorem{theorem}{Theorem}
\newtheorem{lemma}{Lemma}
\newtheorem{corollary}{Corollary}
\newtheorem{proposition}[theorem]{Proposition}
\theoremstyle{definition}
\newtheorem{definition}[theorem]{Definition}
\newtheorem*{question}{Question}
\newtheorem{example}[theorem]{Example}
\newcommand{\abs}[1]{\lvert #1\rvert}
\newcommand{\norm}[1]{\lVert #1\rVert}
\newcommand{\Bigabs}[1]{\Bigl\lvert #1\Bigr\rvert}
\renewcommand{\leq}{\leqslant}
\renewcommand{\geq}{\geqslant}
\newcommand{\term}[1]{{\textit{\textbf{#1}}}}
\mathcal \string{E\string}$-martingale".}
\begin{document}

\begin{frontmatter}
\pretitle{Research Article}

\title{Martingale-like sequences in Banach lattices}

\author[a]{\inits{H.}\fnms{Haile}~\snm{Gessesse}\thanksref{cor1}\ead[label=e1]{hailegessesse@trentu.ca}}
\thankstext[type=corresp,id=cor1]{Corresponding author.}
\author[b]{\inits{A.}\fnms{Alexander}~\snm{Melnikov}\ead[label=e2]{melnikov@ualberta.ca}}

\address[a]{Department of Mathematics,
\institution{Trent University},
Peterborough,~ON,~K9L\,0G2,~\cny{Canada}}
\address[b]{Department of Mathematical and Statistical Sciences,
\institution{University of Alberta}, Edmonton, AB, T6G\,2G1, \cny{Canada}}

\markboth{H. Gessesse, A. Melnikov}{Martingale-like sequences in Banach lattices}




\begin{abstract}
Martingale-like sequences in vector lattice
and Banach lattice frameworks are defined in the same way as martingales are
defined in [Positivity 9 (2005), 437--456]. 
In these frameworks, a collection of bounded $X$-martingales is shown to be a Banach space
under the supremum norm, and under some conditions it is also a Banach
lattice with coordinate-wise order. Moreover, a necessary
and sufficient condition is presented for the collection of
$\mathcal{E}$-martingales to be a vector lattice with coordinate-wise order.
It is also shown that the collection of bounded $\mathcal{E}$-martingales is
a normed lattice but not necessarily a Banach space under the supremum norm.
\end{abstract}
\begin{keywords}
\kwd{Banach lattices}
\kwd{martingales}
\kwd{$E$-martingales}
\kwd{$X$-martingales}
\end{keywords}
\begin{keywords}[MSC2010]%
\kwd[Primary ]{60G48}
\kwd[; Secondary ]{46A40}
\kwd{46B42}
\end{keywords}

\received{\sday{15} \smonth{4} \syear{2018}}
\revised{\sday{7} \smonth{10} \syear{2018}}
\accepted{\sday{8} \smonth{10} \syear{2018}}
\publishedonline{\sday{7} \smonth{11} \syear{2018}}
\end{frontmatter}

\section{Introduction}
The classical definition of martingales is extended to a more general
case in the space of Banach lattices by V.~Troitsky \cite
{TroitskyMartingales:05}. In the Banach lattice framework,
martingales are defined without a probability space and the famous
Doob's convergence theorem was reproduced. Moreover, under certain
conditions on the Banach lattice, it was shown that the set of bounded
martingales forms a Banach lattice with respect to the point-wise order.
In 2011, H.~Gessesse and V.~Troitsky \cite{GessesseMartingales:11}
produced several\vadjust{\goodbreak} sufficient conditions for the space of bounded
martingales on a Banach lattice to be a Banach lattice itself. They
also provided examples showing that the space of bounded martingales is
not necessarily a vector lattice. Several other works have been done by
other authors with regard to martingales in vector lattices, such as
\cite{Watson:13, Grobler:15}.

In the theory of random processes,
not just the study of martingale convergence is important,
but the study of convergence of martingale-like stochastic sequences and processes, and the determination of interrelation between them are also crucial.
So it is natural to ask if martingale-like
sequences can be defined in a vector lattice or Banach lattice
framework. In this article, we define and study martingale-like
sequences in Banach lattices along the same lines as martingales are
defined and studied in \cite{TroitskyMartingales:05}.

Classically, a martingale-like sequence is defined as follows (for
instance, see a paper by A.~Melnikov \cite{Melnikov:82}). Consider a
probability space $(\varOmega,\mathcal{F},P)$ and a filtration
$(\mathcal{F}_n)_{n=1}^{\infty}$, i.e., an increasing sequence of
complete sub-sigma-algebras of $\mathcal{F}$. An integrable stochastic
sequence $x=(x_n,\mathcal{F}_n)$ is an {\bf$L^1$-martingale} if
\[
\lim_{n\rightarrow\infty} \sup_{m\geq n}E\big|E(x_m |
\mathcal {F}_n) - x_n\big| = 0.
\]
An integrable stochastic sequence $x=(x_n,\mathcal{F}_n)$ is an {\bf
$E$-martingale} if
\[
P\bigl\{ \omega: E(x_{n+1} | \mathcal{F}_n) \neq
x_n \text{ infinitely often } \bigr\}=0.
\]

Here we extend the definition of $L^1$-martingales and $E$-martingales
in a general Banach lattice $X$ following the same lines as the
definition of martingales in Banach lattices in \cite
{TroitskyMartingales:05}. First we mention some terminology and
definitions from the theory of Banach lattices for the reader
convenience. For more detailed exploration, we refer the reader to
\cite{Aliprantis:85}. A {\bf vector lattice} is a vector space
equipped with a lattice order relation, which is compatible with the
linear structure. A {\bf Banach lattice} is a vector lattice with a
Banach norm which is monotone, i.e., $0\leq x \leq y$ implies $\norm
{x}\leq\norm{y}$, and satisfies $\norm{x}=\querymark{Q1} \lVert\abs{x}
\rVert$ for any two vectors $x$ and $y$. A vector lattice is said to
be {\bf order complete} if every nonempty subset that is bounded above
has a supremum. We say that a Banach lattice has {\bf order continuous
norm} if $\norm{x_{\alpha}}\rightarrow0$ for every decreasing net
$(x_{\alpha})$ with $\inf x_{\alpha}=0$. A Banach lattice with order
continuous norm is order complete. A sublattice $Y$ of a vector lattice
is called an (order) {\bf ideal} if $y\in Y$ and $|x|\leq|y|$ imply
$x\in Y$. An ideal $Y$ is called a {\bf band} if
$x = \sup_{ \alpha} x_{\alpha}$ implies $x\in Y$ for every positive
increasing net $(x_{\alpha})$ in $Y$. Two elements $x$ and $y$ in a
vector lattice are said to be {\bf disjoint} whenever $|x|\wedge|y| =
0$ holds. If $J$ is a nonempty subset of a vector lattice, then its
{\bf disjoint complement} $J^d$ is the set of all elements of the
lattice, disjoint to every element of $J$. A band $Y$ in a vector
lattice $X$ that satisfies $X = Y\otimes Y^d$ is refered to as a {\bf
projection band}. Every band in an order complete vector lattice is a
projection band. An operator $T$ on a vector lattice X is positive if
$Tx\geq0$ for every $x\geq0$. A sequence of positive projections
$(E_n)$ on a vector lattice $X$ is called a {\bf filtration} if $E_nE_m
= E_{n\wedge m}$. A sequence of positive contractive projections
$(E_n)$ on a normed lattice $X$ is called a {\bf contractive
filtration} if $E_nE_m = E_{n\wedge m}$. A~filtration $(E_n)$ in a
normed lattice $X$ is called \term{dense} if $E_nx\rightarrow x$ for
each $x$ in $X$. In many articles such as in \cite
{TroitskyMartingales:05}, a \term{martingale} with respect to a
filtration $(E_n)$ in a vector lattice $X$ is defined as a sequence
$(x_n)$ in $X$ such that $E_nx_m=x_n$ whenever $m\ge n$.

\section{Main definitions}

\begin{definition}\label{def1}
A sequence $ (x_n)$ of elements of a normed lattice $X$ is called an
{\bf $X$-martingale} relative to a contractive filtration $(E_n)$ if
\[
\lim\limits
_{n\rightarrow\infty} \sup_{m\geq n}\norm{E_nx_m
- x_n} = 0.
\]
\end{definition}

\begin{definition}\label{def2}
A sequence $ (x_n)$ of elements of a vector lattice $X$ is called an
$\mathcal{E}$-\textbf{martingale} relative to a filtration $(E_n)$ if
there exists $n\geq1$ such that $E_m x_{m+1}=x_m$ for all $m\geq n.$
\end{definition}

Note that Definition~\ref{def2} is equivalent to saying a sequence
$(x_n)$ is an $\mathcal{E}$-martingale if there exists $l\geq1$ such
that $E_n x_{m}=x_n$ whenever $m\geq n \ge l$. The symbol ``$\mathcal
{E}$'' stands for eventual so when we say $(x_n)$ is an $\mathcal
{E}$-martingale, we are saying that after a first few finite elements
of the sequence, the sequence becomes a martingale.

Sequences defined by Definition~\ref{def1} and Definition~\ref{def2}
are collectively called {\bf martin\-gale-like sequences}.
Notice that every martingale $(x_n)$ in a vector lattice $X$ with
respect to a filtration $(E_n)$ is obviously an $\mathcal
{E}$-martingale with respect to the filtration
$(E_n)$. Moreover, every $\mathcal{E}$-martingale $(x_n)$ in a Banach
lattice $X$ with respect to a contractive filtration $(E_n)$ is an
$X$-martingale with respect to the contrative filtration
$(E_n)$.
Note that for every $x$ in a vector lattice $X$ and a filtration
$(E_n)$ in $X$, the sequence $(E_n x)$ is an $\mathcal{E}$-martingale
with respect to the filtration $(E_n)$. If $x$ is in a normed space $X$
and $(E_n)$ is a contractive filtration, then the sequence $(E_n x)$ is
an $X$-martingale with respect to the contractive filtration $(E_n)$.

By considering any nonzero martingale $(x_n)$ in a Banach lattice $X$
with respect to filtration $(E_n)$ where $x_1$ is nonzero without loss
of generality, we can define a sequence $(y_n)$ such that $y_1=2x_1$
and $y_n=x_{n}$ for all $n\geq2$. Then one can see that $(y_n)$ is an
$\mathcal{E}$-martingale with respect to the filtration $(E_n)$.
However, $(y_n)$ is not a martingale.

Note that every sequence which converges to zero is an $X$-martingale
with respect to any contractive filtration $(E_n)$ because if
$x_n\rightarrow0$ and $m> n$ then
$\norm{E_n x_m - x_n}\leq\norm{x_m} +\norm{x_n} \rightarrow0$ as
$n\rightarrow\infty$. So one can easily create an $X$-martingale
$(x_n)$ which is not $\mathcal{E}$-martingale by setting $x_n=\frac
{1}{n}x$ where $x$ is a nonzero vector in $X$.

A martingale-like sequence $A=(x_n)$ with respect to a contractive
filtration $(E_n)$ on a normed lattice $X$ is said to be {\bf bounded}
if its norm defined by $\norm{A}=\sup_n\norm{x_n}$ is finite. Given
a contractive filtration $(E_n)$ on a normed lattice $X$, we denote the
set of all bounded $X$-martingales with respect to the contractive
filtration $(E_n)$ by $M_X=M_X(X,(E_n))$ and the set of all bounded
$\mathcal{E}$-martingales with respect to the contractive filtration
$(E_n)$ by $M_E=M_E(X,(E_n))$. With the introduction of the sup norm in
these spaces, one can show that $M_X$ and $M_E$ are normed spaces.
Keeping the notation $M$ of \cite{TroitskyMartingales:05} for all
bounded martingales with respect to the contractive filtration $(E_n)$
and from the preceding arguments, these spaces form a nested increasing
sequence of linear subspaces $M \subset M_E \subset M_X \subset\ell
_\infty(X)$, with the norm being exactly the $\ell_\infty(X)$ norm.\looseness=-1

\begin{theorem}\label{Mx-BS}
Let $(E_n)$ be a contractive filtration on a Banach lattice $X$, then
the collection of $X$-martingales $M_X$ is a closed subspace of $\ell
_\infty(X)$, hence a Banach space.\looseness=-1
\end{theorem}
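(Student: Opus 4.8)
The plan is to show that $M_X$ is norm-closed in $\ell_\infty(X)$; since $X$ is complete, $\ell_\infty(X)$ is a Banach space, and a closed subspace of a Banach space is itself a Banach space, so closedness suffices. Linearity of $M_X$ has already been recorded (these spaces form a nested increasing sequence of linear subspaces), so the only thing left to verify is that a norm-limit of bounded $X$-martingales is again a bounded $X$-martingale.

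First I would take a sequence of bounded $X$-martingales $A^{(k)}=(x_n^{(k)})_n$ with $A^{(k)}\to A=(x_n)_n$ in $\ell_\infty(X)$, which means $\norm{A^{(k)}-A}=\sup_n\norm{x_n^{(k)}-x_n}\to 0$. The key estimate comes from inserting the $k$th martingale via the triangle inequality: for $m\geq n$,
\[
\norm{E_nx_m-x_n}\leq \norm{E_n\bigl(x_m-x_m^{(k)}\bigr)}+\norm{E_nx_m^{(k)}-x_n^{(k)}}+\norm{x_n^{(k)}-x_n}.
\]
Here the contractivity of the projections $E_n$ is essential: it gives $\norm{E_n(x_m-x_m^{(k)})}\leq\norm{x_m-x_m^{(k)}}\leq\norm{A^{(k)}-A}$, and likewise $\norm{x_n^{(k)}-x_n}\leq\norm{A^{(k)}-A}$. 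Taking the supremum over $m\geq n$ then yields
\[
\sup_{m\geq n}\norm{E_nx_m-x_n}\leq 2\norm{A^{(k)}-A}+\sup_{m\geq n}\norm{E_nx_m^{(k)}-x_n^{(k)}}.
\]

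The proof then finishes with a standard $\varepsilon/3$ argument in which the order of the quantifiers is the delicate point. Given $\varepsilon>0$, I would first fix $k$ so large that $\norm{A^{(k)}-A}<\varepsilon/3$; then, with this $k$ held fixed and using that $A^{(k)}$ is an $X$-martingale, choose $N$ so that $\sup_{m\geq n}\norm{E_nx_m^{(k)}-x_n^{(k)}}<\varepsilon/3$ for all $n\geq N$. Substituting both choices into the displayed inequality gives $\sup_{m\geq n}\norm{E_nx_m-x_n}<\varepsilon$ for all $n\geq N$, hence $\lim_n\sup_{m\geq n}\norm{E_nx_m-x_n}=0$ and $A\in M_X$.

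The only genuine obstacle is keeping the quantifiers in the correct order: the approximation term must be made uniformly small over \emph{all} $m$ and $n$ \emph{before} invoking the martingale property of the single fixed approximant $A^{(k)}$, since that property is only an asymptotic statement in $n$. The whole argument hinges on the contractivity of the filtration, which is precisely the hypothesis that lets the first and third terms be dominated by the ambient $\ell_\infty(X)$ distance uniformly in $m$ and $n$; without it one could not pass the limit inside the projections.
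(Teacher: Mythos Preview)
Your argument is correct and follows essentially the same route as the paper: insert an approximating $X$-martingale $A^{(k)}$ via the triangle inequality, use contractivity to bound the first term, and conclude with an $\varepsilon/3$ estimate. The paper's proof is terser---it writes the three-term decomposition and then asserts the limit without spelling out the quantifier bookkeeping you carefully detail---but the underlying idea is identical.
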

\begin{proof}
Suppose a sequence $(A^m)=(x^m_n)$ of $X$-martingales converges to $A$
in $\ell_\infty(X)$. We show $A$ is also an $X$-martingale. Indeed,
from $\norm{A^m-A}=\sup_n\norm{x^m_n-x_n}\rightarrow0$ as
$m\rightarrow\infty$, we have that for each $n\geq1$, $\norm
{x^m_n-x_n}\rightarrow0$ as $m\rightarrow\infty$.
Note that for $l\ge n$,
\begin{align*}
\norm{E_nx_l-x_n}&=\norm{E_nx_l-E_nx^m_l+E_nx^m_l-x^m_n+x^m_n-x_n}
\\
&\leq\norm{E_nx_l-E_nx^m_l}
+\norm{E_nx^m_l-x^m_n}
+\norm{x^m_n-x_n}.
\end{align*}
From these inequalities and the contractive property of the filtration,
we have
\[
\lim_{n\rightarrow\infty} \sup_{l\geq n}\norm{E_nx_l
- x_n} = 0.\qedhere
\]
\end{proof}

\begin{corollary}\label{inclusion}
Let $(E_n)$ be a contractive filtration on a Banach lattice $X$, then
$\overline{M_E} \subset M_X.$
\end{corollary}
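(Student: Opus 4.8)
The plan is to derive this directly from the nested inclusion $M \subset M_E \subset M_X \subset \ell_\infty(X)$ recorded above, together with the closedness of $M_X$ proved in Theorem~\ref{Mx-BS}. Since all of the spaces in question carry the common $\ell_\infty(X)$ norm, every closure may be taken inside the single ambient Banach space $\ell_\infty(X)$, which is precisely what allows the two ingredients to interact.

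First I would note the inclusion $M_E \subset M_X$, which is part of the chain established before Theorem~\ref{Mx-BS}: every bounded $\mathcal{E}$-martingale relative to a contractive filtration on a Banach lattice $X$ is in particular a bounded $X$-martingale. Because the closure operator in a metric space is monotone with respect to set inclusion, applying it to both sides yields $\overline{M_E} \subset \overline{M_X}$.

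Next I would invoke Theorem~\ref{Mx-BS}, which asserts that $M_X$ is a closed subspace of $\ell_\infty(X)$. Closedness means $\overline{M_X} = M_X$, and substituting this into the inclusion from the previous step gives $\overline{M_E} \subset M_X$, as required.

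There is no real obstacle here: the entire analytic content was already absorbed into Theorem~\ref{Mx-BS}, and the corollary is a purely formal consequence of the monotonicity of closure combined with the closedness of the larger space. The only point that deserves a moment's care is that $\overline{M_E}$ must be understood as the closure taken in $\ell_\infty(X)$ rather than in some smaller subspace, so that the closedness of $M_X$ within $\ell_\infty(X)$ can legitimately be applied.
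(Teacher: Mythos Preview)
Your proposal is correct and matches the paper's intent: the corollary is stated without proof immediately after Theorem~\ref{Mx-BS}, and the implied argument is exactly the one you give---combine the inclusion $M_E \subset M_X$ with the closedness of $M_X$ in $\ell_\infty(X)$. There is nothing to add.
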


\begin{lemma}\label{conv-mx}
Let $(E_n)$ be a contractive filtration on a Banach lattice $X$ and
$A=(x_n)$ be in $M_X$ where $x_n\rightarrow x$. Then
\[
\lim\limits
_{n\rightarrow\infty} \sup_{m\ge n}\norm{E_mx -
x_m} = 0.
\]
\end{lemma}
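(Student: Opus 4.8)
The plan is to estimate $\norm{E_m x - x_m}$ directly by inserting a term $E_m x_l$ for a suitably large index $l \geq m$, thereby splitting the quantity into one piece controlled by the convergence $x_n \to x$ and another piece controlled by the $X$-martingale hypothesis. The key observation driving the argument is that the resulting bound will hold \emph{simultaneously for every} $l \geq m$, which lets me send $l$ to infinity while the target quantity $\norm{E_m x - x_m}$ stays fixed.

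First I would write, for any $m$ and any $l \geq m$,
\[
E_m x - x_m = E_m(x - x_l) + (E_m x_l - x_m),
\]
and apply the triangle inequality together with the contractivity $\norm{E_m}\leq 1$ to get
\[
\norm{E_m x - x_m} \leq \norm{x - x_l} + \norm{E_m x_l - x_m}.
\]
Writing $\varepsilon_m := \sup_{j \geq m}\norm{E_m x_j - x_m}$, the condition $l \geq m$ bounds the second summand by $\varepsilon_m$, so that $\norm{E_m x - x_m} \leq \norm{x - x_l} + \varepsilon_m$ for all $l \geq m$. Since the left-hand side is independent of $l$, I would then let $l\to\infty$; because $x_l\to x$ we have $\inf_{l\geq m}\norm{x-x_l}=0$, and hence $\norm{E_m x - x_m} \leq \varepsilon_m$.

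Finally, taking the supremum over $m \geq n$ yields $\sup_{m\geq n}\norm{E_m x - x_m} \leq \sup_{m\geq n}\varepsilon_m$. The defining property of an $X$-martingale is precisely that $\varepsilon_m \to 0$ as $m \to \infty$ (this is the hypothesis $A\in M_X$, read with $m$ in place of the running index $n$), and a null sequence has vanishing tail suprema, so $\sup_{m\geq n}\varepsilon_m \to 0$ as $n\to\infty$. This gives the conclusion. The only step requiring care is the passage to the limit in $l$: one must note that the uniform-in-$l$ inequality holds for every admissible $l$ before taking the infimum, so that the convergence of $x_l$ annihilates the first term without disturbing $\varepsilon_m$. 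Contractivity of the projections and the definition of the supremum norm handle everything else, so I do not expect any substantial obstacle beyond this bookkeeping.
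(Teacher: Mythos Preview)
Your proof is correct and follows essentially the same approach as the paper: insert the intermediate term $E_m x_l$ (the paper uses $E_n x_m$ with the roles of the indices swapped), apply the triangle inequality and contractivity, and then control the two pieces by the convergence $x_l\to x$ and the $X$-martingale hypothesis respectively. The only cosmetic difference is that you let the auxiliary index $l\to\infty$ (taking an infimum) to eliminate the first term before passing to the tail supremum, whereas the paper applies $\lim_{n\to\infty}\sup_{m\ge n}$ directly to both sides of the inequality; both devices lead to the same conclusion.
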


\begin{proof}
Let $A=(x_n)$ be in $M_X$ where $x_n\rightarrow x$. Thus, for $m\geq n$
\[
\norm{E_nx - x_n}= \norm{E_nx -
E_nx_m+E_nx_m-x_n}
\leq\norm{x-x_m} +\norm{E_nx_m-x_n}.
\]
Taking $\lim\limits_{n\rightarrow\infty} \sup_{m\geq n}$ on both
sides of the inequality completes the proof.
\end{proof}

The following proposition confirms that for any convergent element\querymark{Q2}
$A=(x_n)$ of $M_X$ we can find a sequence in $M_E$ that converges to $A$.
\begin{proposition}\label{halfdense}
Let $(E_n)$ be a contractive filtration on a Banach lattice $X$ and
$A=(x_n)$ be a sequence in $M_X$ such that $x_n\rightarrow x$. Then
there exists a sequence $A^m$ in $M_E$ such that $A^m \rightarrow A$ in
$\ell_{\infty}(X)$.
\end{proposition}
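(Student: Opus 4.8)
The plan is to approximate the convergent martingale $A=(x_n)$ by splicing its tail onto the canonical martingale generated by the limit $x$. The key preliminary observation is that the sequence $(E_nx)$ is itself a martingale with respect to $(E_n)$: using $E_kE_{k+1}=E_{k\wedge(k+1)}=E_k$ we get $E_k(E_{k+1}x)=E_kx$ for every $k$, so $(E_nx)$ satisfies the martingale identity at every index and is in particular a bounded $\mathcal{E}$-martingale, the bound $\norm{E_nx}\le\norm{x}$ coming from contractivity of the filtration.

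For each $m\ge1$ I would define $A^m=(y^m_n)$ by $y^m_n=x_n$ for $n\le m$ and $y^m_n=E_nx$ for $n\ge m+1$. The first step is to verify $A^m\in M_E$. Boundedness is immediate from $\sup_n\norm{y^m_n}\le\max\{\norm{A},\norm{x}\}$. For the eventual martingale property, take $l=m+1$ in Definition~\ref{def2}: whenever $k\ge m+1$ we have $y^m_k=E_kx$ and $y^m_{k+1}=E_{k+1}x$, hence $E_ky^m_{k+1}=E_kE_{k+1}x=E_kx=y^m_k$. Thus $A^m$ becomes a genuine martingale from index $m+1$ onward, so it is an $\mathcal{E}$-martingale. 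The one point that needs care is exactly this splicing: the junction step $E_my^m_{m+1}=E_mx$ generally differs from $y^m_m=x_m$, so the martingale identity fails at index $m$; this is harmless only because the splice sits strictly below the threshold $l=m+1$, which is precisely what the ``eventual'' clause of Definition~\ref{def2} permits.

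Finally I would estimate the distance in $\ell_\infty(X)$. Since $y^m_n-x_n=0$ for $n\le m$ and $y^m_n-x_n=E_nx-x_n$ for $n\ge m+1$, the sup norm collapses to $\norm{A^m-A}=\sup_{n\ge m+1}\norm{E_nx-x_n}$. Convergence to zero is then handed to us directly by Lemma~\ref{conv-mx}, which asserts $\sup_{n\ge N}\norm{E_nx-x_n}\to0$ as $N\to\infty$; applying it with $N=m+1$ gives $\norm{A^m-A}\to0$. I do not anticipate a genuine obstacle here: the whole argument reduces to the correct choice of the spliced sequence, and the only subtlety is ensuring the junction index falls before the eventual-martingale threshold so that $A^m$ legitimately belongs to $M_E$.
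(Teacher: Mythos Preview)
Your proof is correct and follows essentially the same approach as the paper: the same spliced sequence $A^m$ (agreeing with $A$ on $n\le m$ and with the martingale $(E_nx)$ for $n>m$) and the same appeal to Lemma~\ref{conv-mx} for the $\ell_\infty(X)$ convergence. You supply more detail than the paper does---the explicit verification that $A^m\in M_E$ with threshold $l=m+1$ and the remark about the junction index---but the construction and the key estimate are identical.
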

\begin{proof}
Suppose $x_n \rightarrow x$ as $n\rightarrow\infty$. First note that
the sequence $(E_nx)$ is in $M$. Now define $A^m=(a^m_n)$ such that
\[
a^m_n= %
\begin{cases}
 x_n,&  \text{for } n\le m,\\
 E_nx,& \text{for } n > m.
\end{cases} %
\]
Then $A^m\in M_E$ and $A^m \rightarrow A$ in $\ell_{\infty}(X)$,
hence $A\in\overline{M_E}$. Indeed, by Lemma~\ref{conv-mx},
\[
\lim_{m\rightarrow\infty}\norm{A^m-A}=\lim_{m\rightarrow\infty
}
\sup_j\norm{E_{m+j}x-x_{m+j}}=0.\qedhere
\]
\end{proof}

In \cite{TroitskyMartingales:05} and \cite{GessesseMartingales:11}
several sufficient conditions are es\querymark{Q3}tablished where the set of bounded
martingales $M$ is a Banach lattice. In \cite{GessesseMartingales:11},
counter examples are provided where $M$ is not a Banach lattice. So,
one may similarly ask when are $M_X$ and $M_E$ Banach spaces and Banach
lattices? We start by showing a counter example that illustrates that
$M_E$ is not necessarily a Banach space.
\begin{example}
Let $c_0$ be the set of sequences converging to zero. Consider the
filtration $(E_n)$ where $E_n \sum_{i=1}^{\infty} \alpha_i e_i =
\sum_{i=1}^{n} \alpha_i e_i$. Thus the sequence $(y_n)$ where $y_n=
\sum_{i=1}^{n} \frac{1}{i} e_i$ is an $E$-martingale with respect to
this filtration. We define a sequence of $E$-martingales $A^m$ as
$A^m=(x_n^m)$ where
\[
x_n^m= %
\begin{cases}
 \sum_{i=n}^{\infty} \frac{1}{i} e_i  ,&  \text{for } n\leq m,\\
 y_n/m,& \text{for } n>m.
\end{cases} %
\]
Define a sequence $A=(x_n)$ where $x_n=\sum_{i=n}^{\infty} \frac
{1}{i} e_i $. We can see that $A$ is not an $E$-martingale. But one can
show that $A^m$ converges to $A$. Indeed,
\[
\bigl\lVert A^m - A \bigr\rVert=\sup_{n}
\big\|x^m_n-x\big\|=\sup_{n \in
\{m+1,m+2, \ldots\}}
\Bigg\|y_n/m-\sum_{i=n}^{\infty}
\frac{1}{i} e_i \Bigg\| \rightarrow0
\]
as $m\rightarrow\infty.$
\end{example}

\section{When is $M_E$ a vector lattice?}

Given a vector (Banach) lattice $X$ and a filtration (respectively
contractive) $(E_n)$ on $X$, we can introduce order structure on the
spaces $M_E$ and $M_X$ as follows. For two bounded $\mathcal
{E}$-martingales (respectively $X$-martingales) $A=(x_n)$ and
$B=(y_n)$, we write $A\geq B$ if $x_n\geq y_n$ for each $n$. With this
order $M_E$ and $M_X$ are ordered vector spaces and the monotonicity of
the norm follows from the monotonicity of the norm of $X$, i.e. for two
$\mathcal{E}$-martingales (respectively $X$-martingales) with $0\leq A
\leq B$, we have $\norm{A}\leq\norm{B}$. For two $\mathcal
{E}$-martingales (respectively $X$-martingales) $A=(x_n)$ and
$B=(y_n)$, one may guess that $A\lor B$ (or $A\wedge B$) can be
computed by the formulas $A\lor B =(x_n\lor y_n)$ (or $A\wedge
B=(x_n\wedge y_n)$). We show in the following theorem that this is in
fact the case in order for $M_E$ to be a vector lattice. However, this
is not obvious to show in the case of $M_X$.

\begin{theorem}\label{vl-equivalence}
Let $X$ be a vector lattice. Then the following statements are equivalent.
\begin{enumerate}
\item[(i)]$M_E$ is a vector lattice.
\item[(ii)] For each $A=(x_n)$ in $M_E$, the sequence $(|x_n|)$ is an
$\mathcal{E}$-martingale and $\abs{A}=(\abs{x_n})$.
\item[(iii)] $M_E$ is a sublattice of $\ell_{\infty}(X)$.
\end{enumerate}
\end{theorem}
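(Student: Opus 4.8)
The plan is to prove the cyclic chain of implications $(i)\Rightarrow(ii)\Rightarrow(iii)\Rightarrow(i)$, since two of these links are nearly immediate and the real content lives in a single implication. The key structural fact I would exploit throughout is the remark made just after Definition~\ref{def2}: a sequence $A=(x_n)$ is an $\mathcal{E}$-martingale precisely when there is some $l\geq 1$ with $E_n x_m = x_n$ for all $m\geq n\geq l$; that is, the tail is a genuine martingale and only finitely many initial terms are arbitrary. This ``eventually a martingale'' characterization is what makes coordinate-wise lattice operations behave well, and it is also the source of the freedom that can break the lattice property.

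\medskip

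First I would dispatch the easy implications. For $(iii)\Rightarrow(i)$: if $M_E$ is a sublattice of $\ell_\infty(X)$ then it is closed under the lattice operations inherited from $\ell_\infty(X)$, and an ordered vector space that is closed under $\vee$ and $\wedge$ is by definition a vector lattice, so nothing is left to check. For $(ii)\Rightarrow(iii)$: statement $(ii)$ asserts that for each $A=(x_n)\in M_E$ the coordinate-wise modulus $(\abs{x_n})$ again lies in $M_E$ and equals $\abs{A}$; since the modulus in $\ell_\infty(X)$ is exactly the coordinate-wise modulus, this says that the $\ell_\infty(X)$-modulus of any element of $M_E$ stays in $M_E$, and a linear subspace closed under the ambient modulus is a sublattice, giving $(iii)$. (One should record that boundedness is preserved: $\sup_n\norm{\abs{x_n}}=\sup_n\norm{x_n}<\infty$ by $\norm{\abs{x_n}}=\norm{x_n}$, so $(\abs{x_n})$ is indeed \emph{bounded}.)

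\medskip

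The main obstacle, and the step I would spend the most care on, is $(i)\Rightarrow(ii)$: assuming only the abstract fact that $M_E$ is a vector lattice, I must identify the lattice modulus $\abs{A}$ with the concrete sequence $(\abs{x_n})$. The natural strategy is to show that $(\abs{x_n})$ is itself a member of $M_E$ and then prove it satisfies the defining property of the supremum $A\vee(-A)$ in $M_E$, namely that it is the least upper bound of $\{A,-A\}$ in the coordinate-wise order. To see $(\abs{x_n})\in M_E$, I would use the eventual-martingale description: choose $l$ with $E_n x_m=x_n$ for $m\geq n\geq l$; since each $E_n$ is a positive projection, for $m\geq n\geq l$ I expect to relate $E_n\abs{x_m}$ and $\abs{x_n}$, but equality here is exactly the delicate point, because a positive operator satisfies only $\abs{E_n x_m}\leq E_n\abs{x_m}$ in general. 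This is where I anticipate the real difficulty: positivity of the projections gives the comparison in one direction but not the coordinate-wise identity $E_n\abs{x_m}=\abs{x_n}$ outright, so I would lean on the assumption in $(i)$ that $A\vee(-A)$ \emph{exists in $M_E$} to pin down its coordinates.

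\medskip

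Concretely, for the final push I would argue as follows. Since $M_E$ is a vector lattice, $B:=A\vee(-A)=\abs{A}$ exists in $M_E$; write $B=(b_n)$. Because the order on $M_E$ is coordinate-wise and $B\geq A$, $B\geq -A$, we get $b_n\geq x_n$ and $b_n\geq -x_n$, hence $b_n\geq \abs{x_n}$ for every $n$. For the reverse inequality I would test $B$ against a competing upper bound: the sequence $(\abs{x_n})$, \emph{provided} it lies in $M_E$, is an upper bound for $\{A,-A\}$ in the coordinate-wise order, so minimality of $B$ forces $b_n\leq\abs{x_n}$, yielding $b_n=\abs{x_n}$ and hence $\abs{A}=(\abs{x_n})$ together with the membership claim. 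Thus everything reduces to verifying that $(\abs{x_n})\in M_E$, i.e. that $(\abs{x_n})$ is an $\mathcal{E}$-martingale; I would establish this by showing $E_n\abs{x_m}=\abs{x_n}$ for large $n\leq m$, using that on the eventual martingale tail the range vectors are fixed points of the projections and invoking any available band/projection structure to convert the inequality $\abs{E_n x_m}\leq E_n\abs{x_m}$ into the needed equality. I expect this identity to be the crux of the whole theorem, and I would isolate it as the one computation meriting full detail.
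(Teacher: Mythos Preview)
Your handling of $(ii)\Rightarrow(iii)\Rightarrow(i)$ is fine and matches the paper. The gap is in $(i)\Rightarrow(ii)$. You reduce everything to showing that $(\abs{x_n})$ is itself an $\mathcal{E}$-martingale, and then propose to prove $E_n\abs{x_m}=\abs{x_n}$ for large $n\leq m$ by ``invoking any available band/projection structure.'' But no such structure is assumed: the $E_n$ are merely positive projections, and in that generality the identity $E_n\abs{x_m}=\abs{E_n x_m}$ simply fails (Examples~\ref{ReviewerExample} and~\ref{cexample} in the paper are exactly situations where $(\abs{x_n})$ is \emph{not} an $\mathcal{E}$-martingale). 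So the step you flag as ``the crux'' is one you cannot carry out from hypothesis~$(i)$ alone, and your argument becomes circular: you need $(\abs{x_n})\in M_E$ to compare it with $B$, but that membership is essentially the conclusion.

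The paper avoids this trap by never trying to prove $(\abs{x_n})\in M_E$ directly. Instead, with $B=\abs{A}=(y_n)\in M_E$ already in hand (so $E_n y_m=y_n$ for $m\ge n\ge l$), it builds, for each fixed $k>l$, a \emph{hybrid} sequence $C=(z_n)$ with $z_n=\abs{x_n}$ for $n\le k$ and $z_n=y_n$ for $n>k$. This $C$ is an $\mathcal{E}$-martingale automatically, because its tail beyond index $k$ coincides with that of $B$; no identity of the form $E_n\abs{x_m}=\abs{x_n}$ is needed. Since $\pm A\le C\le B$ coordinate-wise and $B$ is the least upper bound of $\{A,-A\}$ in $M_E$, one gets $C=B$, hence $y_n=\abs{x_n}$ for all $n\le k$. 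Letting $k\to\infty$ gives $y_n=\abs{x_n}$ for every $n$, and now $(\abs{x_n})=B\in M_E$ follows \emph{a posteriori}. That splicing trick is the missing idea in your proposal.
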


\begin{proof}
First we show (\textit{i})~$\implies$~(\textit{ii}). Suppose $M_E$ is a vector lattice and
$A=(x_n)$ is in $M_E$. Since $M_E$ is a vector lattice, $\abs{A}$
exists in $M_E$, say $|A|=B:=(y_n)$. Since $\pm A\le B$, for each $n$,
$\pm x_n\le y_n$. So, $\abs{x_n}\le y_n$ for each $n$. Since $B$ is in
$M_E$, there exists $l$ such that $E_ny_m=y_n$ whenever $m\ge n\ge l$.
Now we claim that $y_n=|x_n|$ for each $n$. Fix $k> l$. We show
$y_n=|x_n|$ for each $n\le k$.

Indeed, define an $\mathcal{E}$-martingale $C=(z_n)$ where
\[
z_n= %
\begin{cases}
 \abs{x_n},& \text{for } n\le k,\\
 y_n, &\text{for } n> k.
\end{cases} %
\]
Since $k>l$ we can easily see that $C$ is an $\mathcal{E}$-martingale.
Moreover, $C\ge0$ and $\pm A \le C \le B$. Since $\abs{A}=B$, $C=B$.
Thus, for every $n\le k$, $y_n=|x_n|$. This establishes (\textit{ii}).

(\textit{ii})~$\implies$~(\textit{iii})~$\implies$~(\textit{i}) is straightforward.
\end{proof}

Using the equivalence in Theorem~\ref{vl-equivalence}, the following
examples illustrate 
that $M_E$ is not always a vector lattice.

\begin{example}\label{ReviewerExample}
Consider the classical martingale $(x_n)$ in $L_1[0,1]$ where
$x_n=\break2^n\mathbf{1}_{[0,2^{-n}]} -\mathbf{1}$ with the filtration
$(\mathcal{F}_n)$ where $\mathcal{F}_n$ is the smallest sigma algebra
generated by the set
\[
\bigl\{ \bigl[0,2^{-n}\bigr], (2^{-n}, 2^{-n+1}],
\dots, (1-2^{-n}, 1] \bigr\}.
\]
One can easily show that
\[
E_n\abs{x_{n+1}}=E \bigl[|x_{n+1}| |x_n \bigr]
\ne|x_n|
\]
for
every $n$ and the sequence $(|x_n|)$ fails to be an $\mathcal
{E}$-martingale. Hence, Theorem~\ref{vl-equivalence} implies that
$M_E$ is not a vector lattice.
\end{example}

\begin{example}\label{cexample}
Consider the filtration $(E_n)$ defined on $c_0$ as follows. For each
$n=0, 1, 2, \ldots$
\[
E_n = %
\begin{bmatrix}
1 & & & & & & & \\
& \ddots & & & & & &\\
& & 1 & & & & &\\
& & &1/2 &1/2 & & &\\
& & &1/2 &1/2 & & &\\
& & & & &1/2 &1/2 &\\
& & & & &1/2 &1/2 &\\
& & & & & & &\ddots\\
\end{bmatrix} %
\]
with $2n$ ones in the upper left corner. For each $e_i=(0,\ldots, 0,
\underbrace{1}_{i^{\text{th}}}, 0, \ldots)$,
$E_ne_i=e_i$ if $i\leq2n$ and $E_ne_{2k-1}=E_ne_{2k}=\frac
{1}{2}(e_{2k-1}+e_{2k})$ if $n<k$. Now if we define a sequence
$A=(x_n)$ where for each $n=0,1,2,\ldots$,
\[
x_n=(\underbrace{-1, 1, \dots, -1, 1}_{2n\text{-tuple}}, 0, \dots),
\]
one can show thi\querymark{Q4}s is a martingale as a result an $\mathcal
{E}$-martingale. However, $|A|=(|x_n|)$ where
\[
|x_n|=(\underbrace{1, \dots, 1}_{2n\text{-tuple}}, 0,\dots)
\]
is not an $\mathcal{E}$-martingale. So, Theorem~\ref{vl-equivalence}
implies that $M_E$ is not a vector lattice.
\end{example}

\begin{proposition}
If a filtration $(E_n)$ is a sequence of band projections, then $M_E$
is a vector lattice with coordinate-wise lattice operations.
\end{proposition}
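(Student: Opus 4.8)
The plan is to reduce the statement to condition~(ii) of Theorem~\ref{vl-equivalence}: it suffices to show that for every $A=(x_n)$ in $M_E$ the sequence $(\abs{x_n})$ is again an $\mathcal{E}$-martingale, for then the already-established equivalence forces $M_E$ to be a vector lattice with $\abs{A}=(\abs{x_n})$. The coordinate-wise description of the lattice operations then follows formally from $A\lor B=\frac12(A+B+\abs{A-B})$ and $A\wedge B=\frac12(A+B-\abs{A-B})$ together with $\abs{A-B}=(\abs{x_n-y_n})$.

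The crucial ingredient is the commutation of a band projection with the modulus: if $E_n$ is the band projection onto a projection band $B$ with complementary band $B^d$, then $E_n\abs{x}=\abs{E_nx}$ for every $x$. First I would record the decomposition $x=E_nx+(I-E_n)x$ with $E_nx\in B$ and $(I-E_n)x\in B^d$. Since every element of $B$ is disjoint from every element of $B^d$, the Riesz identity $\abs{a+b}=\abs{a}+\abs{b}$ for disjoint $a,b$ gives $\abs{x}=\abs{E_nx}+\abs{(I-E_n)x}$. Because a band is a sublattice stable under the modulus, $\abs{E_nx}\in B$ and $\abs{(I-E_n)x}\in B^d$; applying $E_n$, which acts as the identity on $B$ and as zero on $B^d$, then yields $E_n\abs{x}=\abs{E_nx}$. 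The same computation also shows $\abs{E_nx}=E_n\abs{x}\le\abs{x}$, so each $E_n$ is contractive and $(E_n)$ is indeed a contractive filtration, which keeps us inside the framework where $M_E$ is defined.

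With this identity the main argument is immediate. Given $A=(x_n)\in M_E$, Definition~\ref{def2} provides an $l\ge1$ with $E_nx_m=x_n$ whenever $m\ge n\ge l$, and the commutation identity gives, for $m\ge n\ge l$,
\[
E_n\abs{x_m}=\abs{E_nx_m}=\abs{x_n}.
\]
Hence $(\abs{x_n})$ is an $\mathcal{E}$-martingale, and it is bounded since $\norm{\abs{x_n}}=\norm{x_n}$, so $(\abs{x_n})\in M_E$. This is precisely condition~(ii), and Theorem~\ref{vl-equivalence} completes the proof. I expect the only genuine obstacle to be the verification of the band-projection identity $E_n\abs{x}=\abs{E_nx}$, resting on the disjointness of the two summands of $x$ and on the stability of a band under the modulus; once that is in place the $\mathcal{E}$-martingale property transfers coordinate-wise and nothing further is needed.
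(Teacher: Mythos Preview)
Your proof is correct and follows essentially the same approach as the paper: both use the identity $E_n\abs{x_m}=\abs{E_nx_m}$ for band projections to deduce that $(\abs{x_n})$ is an $\mathcal{E}$-martingale whenever $(x_n)$ is, and then conclude via Theorem~\ref{vl-equivalence}. You simply supply more detail, justifying the band-projection identity and spelling out how the coordinate-wise lattice operations follow, whereas the paper takes these as known.
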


\begin{proof}
If $A=(x_n)\in M_E$, then there exists $l$ such that $E_nx_m=x_n$
whenever $m\ge n \ge l$. Thus, $E_n\abs{x_m}=\abs{E_nx_m}=\abs
{x_n}$. So, $|A|=(\abs{x_n})$ and thus $M_E$ is a vector lattice.
\end{proof}

\begin{theorem}\label{filt-char}
If $M_E$ is a normed lattice and the filtration $(E_n)$ is dense in
$X$, then for each $x$ in $X$, there exists $l$ such that $\abs
{E_nx}=E_n\abs{x}$ whenever $n\ge l$.
\end{theorem}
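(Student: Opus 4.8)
The plan is to exploit the fact that, for a fixed $x\in X$, the constant-generated sequence $A=(E_nx)$ is itself a bounded $\mathcal{E}$-martingale, and then to read off the desired identity from the lattice structure of $M_E$ together with the density of $(E_n)$. So first I would verify that $A=(E_nx)$ lies in $M_E$. Since $(E_n)$ is a filtration, $E_n(E_mx)=E_{n\wedge m}x=E_nx$ whenever $m\ge n$, so $A$ is in fact a martingale, and hence an $\mathcal{E}$-martingale. Boundedness is immediate from contractivity: $\norm{E_nx}\le\norm{x}$ for all $n$, so $\norm{A}=\sup_n\norm{E_nx}\le\norm{x}<\infty$. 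Thus $A\in M_E$.

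Next, because $M_E$ is assumed to be a normed lattice it is in particular a vector lattice, so Theorem~\ref{vl-equivalence} applies: implication (\textit{i})~$\Rightarrow$~(\textit{ii}) guarantees that the sequence $(\abs{E_nx})$ is again an $\mathcal{E}$-martingale (with $\abs{A}=(\abs{E_nx})$). Unwinding the definition of an $\mathcal{E}$-martingale for $(\abs{E_nx})$ in the equivalent form noted after Definition~\ref{def2}, there exists $l\ge 1$ such that
\[
E_n\abs{E_mx}=\abs{E_nx}\qquad\text{whenever } m\ge n\ge l.
\]

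Finally I would pass to the limit in $m$, which is where density enters. Fix $n\ge l$. By density $E_mx\to x$, and since $\abs{\abs{E_mx}-\abs{x}}\le\abs{E_mx-x}$ in the lattice, monotonicity of the norm together with $\norm{y}=\norm{\abs{y}}$ yields $\norm{\abs{E_mx}-\abs{x}}\le\norm{E_mx-x}\to 0$, so $\abs{E_mx}\to\abs{x}$. As $E_n$ is contractive, hence continuous, letting $m\to\infty$ in the displayed identity gives $E_n\abs{x}=\abs{E_nx}$ for every $n\ge l$, which is exactly the claim.

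The conceptual heart of the argument is recognizing that the identity $E_n\abs{x}=\abs{E_nx}$ for large $n$ is precisely the $\mathcal{E}$-martingale property of $(\abs{E_nx})$ read off in the limit, a property that is supplied by Theorem~\ref{vl-equivalence} once one observes that $(E_nx)$ belongs to $M_E$. The only delicate points are the two limit justifications in the last step, namely continuity of the modulus map with respect to the norm (so that $\abs{E_mx}\to\abs{x}$) and continuity of $E_n$ (so that $E_n\abs{E_mx}\to E_n\abs{x}$); both are routine consequences of the contractive and monotone-norm hypotheses, so I do not expect a genuine obstacle there. The one structural subtlety worth stating carefully is that the index $l$ produced by the $\mathcal{E}$-martingale property of $(\abs{E_nx})$ may depend on the chosen $x$, which is consistent with the ``there exists $l$'' quantifier in the statement.
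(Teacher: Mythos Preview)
Your proof is correct and follows essentially the same route as the paper: show $(E_nx)$ is a martingale in $M_E$, invoke Theorem~\ref{vl-equivalence} to conclude $(\abs{E_nx})$ is an $\mathcal{E}$-martingale, then let $m\to\infty$ in $E_n\abs{E_mx}=\abs{E_nx}$ using density. You are in fact more careful than the paper about justifying the limit passage (continuity of the modulus and of $E_n$), but the argument is the same.
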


\begin{proof}
Let $x$ be in $X$. Then $(E_n)$ is dense means $E_n x\rightarrow x$.
Moreover, $(E_nx)$ is a martingale. Since $M_E$ is a vector lattice, by
Theorem~\ref{vl-equivalence}, $(\abs{E_n x})$
is an $E$-martingale. Thus there exists $l$ such that for any $m$ and
$n$ with $m\ge n\ge l$, $\abs{E_n E_m x}=\abs{E_n x}$ and
$E_n\abs{ E_m x}=\abs{E_n x}$. So, $\abs{E_n E_m x}=E_n\abs{ E_m
x}$ and letting $m\rightarrow\infty$, we have $\abs{E_n x}=E_n\abs{ x}$.
\end{proof}

\section{When is $M_X$ a Banach lattice?}

Under the pointwise order structure on $M_X$, for an $X$-martingale
$A=(x_n)$, we can refer to Example~\ref{cexample} to show that the sequence
$(|x_n|)$ is not necessarily an $X$-martingale. However, under certain
assumptions, we can show that $(\abs{x_n})$ is \xch{an}{an an} $X$-martingale
for every $X$-martingale $A=(x_n)$ making $M_X$ a Banach lattice.

\begin{proposition}
If $(E_n)$ is a contractive filtration where $E_n$ is a band projection
for every $n$ then $M_X$ is a Banach lattice with coordinate-wise
lattice operations.
\end{proposition}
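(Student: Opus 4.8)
The plan is to show that $M_X$ is closed under the coordinate-wise modulus, so that it forms a sublattice of $\ell_\infty(X)$; combining this with the fact that $M_X$ is already a Banach space by Theorem~\ref{Mx-BS} and that its norm is monotone will then yield the Banach lattice structure. The decisive property I would exploit is that, because each $E_n$ is a band projection, it commutes with the modulus, i.e., $E_n\abs{x}=\abs{E_nx}$ for every $x\in X$ (the components $E_nx$ and $(I-E_n)x$ are disjoint, so the modulus respects the band decomposition).

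The key step is to verify that if $A=(x_n)\in M_X$, then $(\abs{x_n})$ is again a bounded $X$-martingale. Boundedness is immediate since $\norm{\abs{x_n}}=\norm{x_n}$, so $\sup_n\norm{\abs{x_n}}=\norm{A}<\infty$. For the martingale estimate, I would fix $m\ge n$ and combine the band-projection identity with the lattice inequality $\abs{\abs{a}-\abs{b}}\le\abs{a-b}$ and the monotonicity of the norm to obtain
\[
\norm{E_n\abs{x_m}-\abs{x_n}}=\norm{\abs{E_nx_m}-\abs{x_n}}\le\norm{E_nx_m-x_n}.
\]
Taking $\lim_{n\to\infty}\sup_{m\ge n}$ of both sides and using that $A$ is an $X$-martingale forces the left-hand side to $0$, so $\abs{A}:=(\abs{x_n})\in M_X$.

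Next I would upgrade this to a sublattice statement. Since $M_X$ is a linear subspace, $A-B\in M_X$ whenever $A,B\in M_X$, and the previous step applied to $A-B$ gives $(\abs{x_n-y_n})\in M_X$, whence
\[
A\lor B=\frac{1}{2}\bigl(A+B+\abs{A-B}\bigr)=(x_n\lor y_n)\in M_X.
\]
Because the coordinate-wise supremum is exactly the supremum in $\ell_\infty(X)$ and $M_X\subset\ell_\infty(X)$, the element $(x_n\lor y_n)$ is an upper bound of $A$ and $B$ lying in $M_X$, and any upper bound in $M_X$ is an upper bound in $\ell_\infty(X)$ and therefore dominates it; hence $(x_n\lor y_n)$ is the least upper bound within $M_X$. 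Thus $M_X$ is a sublattice of $\ell_\infty(X)$, in particular a vector lattice under coordinate-wise operations.

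Finally I would check that the supremum norm is a lattice norm. If $\abs{A}\le\abs{B}$ then $\abs{x_n}\le\abs{y_n}$ for each $n$, so $\norm{x_n}\le\norm{y_n}$ by monotonicity of the norm on $X$, giving $\norm{A}\le\norm{B}$; moreover $\norm{\abs{A}}=\sup_n\norm{\abs{x_n}}=\sup_n\norm{x_n}=\norm{A}$. Together with completeness from Theorem~\ref{Mx-BS}, this makes $M_X$ a Banach lattice with coordinate-wise lattice operations. I expect the sole genuine obstacle to be the estimate in the second paragraph: without the band-projection hypothesis the identity $E_n\abs{x_m}=\abs{E_nx_m}$ fails (as Example~\ref{cexample} already illustrates for $M_E$), and the whole argument collapses, so the proof hinges entirely on this commutation property.
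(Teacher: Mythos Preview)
Your proof is correct and follows the same core argument as the paper: use $E_n\abs{x_m}=\abs{E_nx_m}$ together with $\bigl|\,\abs{a}-\abs{b}\,\bigr|\le\abs{a-b}$ to bound $\norm{E_n\abs{x_m}-\abs{x_n}}$ by $\norm{E_nx_m-x_n}$, so that $(\abs{x_n})\in M_X$. The paper's proof stops there, leaving the sublattice, norm-compatibility, and completeness verifications implicit, whereas you spell them out explicitly.
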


\begin{proof}
Let $A=(x_n)$ be an $X$-martingale. For each $n$ and $m$, $E_n$ is a
band projection implies $E_n|x_m|=|E_nx_m|$.
Thus, by the fact that $\Bigabs{ |x|-|y|}\leq\abs{x-y}$,
for $m\geq n$,
\[
\big\|E_n|x_m|-|x_n|\big\|=
\big\||E_nx_m|-|x_n|\big\|\leq
\norm{E_nx_m-x_n}.
\]
This implies
\[
\lim_{n\rightarrow\infty} \sup_{m\geq n}\big\|E_n|x_m|
- |x_n|\big\| = 0
\]
which implies $|A|=(|x_n|)$ is also an $X$-martingale.
\end{proof}

\begin{question}
From Theorem~\ref{vl-equivalence}, $M_E$ is a vector lattice if and
only if for each $\mathcal{E}$-martingale $(x_n)$, the sequence $(\abs
{x_n})$ is also an $\mathcal{E}$-martingale. This is the case when the
filtration is a sequence of band projections. Can one give a
characterization of the filtrations for which $M_E$ is a vector
lattice? Or, can one give an example of a filtration which
is not a sequence of projections
and the corresponding $M_E$ is a vector lattice?
\end{question}




\end{document}